\newtheorem{prop}{Proposition}
\newtheorem{thm}{Theorem}
\theoremstyle{remark}
\newtheorem{rmk}{Remark}
\newtheorem{ex}{Example}
\begin{document}

\title{Hafnian of two-parameter matrices}
\author{Dmitry Efimov\thanks{e-mail: dmefim@mail.ru}\\ Institute of Physics and Mathematics,\\ Komi Science Centre UrD RAS,\\
Syktyvkar, Russia}
\date{}

\maketitle

\begin{abstract}
The concept of the hafnian first appeared in the works on quantum field theory by E. R. Caianiello.
However, it also has an important combinatorial property: the hafnian of the adjacency matrix of an undirected weighted graph
is equal to the total sum of the weights of perfect matchings in this graph.
In general, the use of the hafnian is limited by the complexity of its computation.
In this paper, we present an efficient method for the exact calculation of the hafnian of two-parameter matrices. 
In terms of graphs, we count the total sum of the weights of perfect matchings in graphs 
whose edge weights take only two values.
This method is based on the formula expressing the hafnian of a sum of two matrices through the product of the hafnians of their submatrices.
The necessary condition for the application of this method is the possibility to count the number of $k$-edge matchings in some graphs.
We consider two special cases in detail using a Toeplitz matrix as the two-parameter matrix.
As an example, we propose a new interpretation of some of the sequences from the On-Line Encyclopedia of Integer Sequences
and then provide new analytical formulas to count the number of certain linear chord diagrams. 
\end{abstract}

\section*{Introduction} 
Let $A=(a_{ij})$ be a symmetric matrix of even order $n$ over a commutative associative ring.
The {\itshape hafnian} of $A$ is defined as 
$$
  \textrm{Hf}(A)=\sum_{(i_1i_2|\dots|i_{n-1}i_n)} a_{i_1i_2}\cdots a_{i_{n-1}i_n},
$$
where the sum ranges over all unordered partitions of the set $\{1,2,\dots, n\}$ into unordered pairs $(i_1i_2)$, $\dots$, $(i_{n-1}i_n)$.  
Therefore, if $n=4$, then $\textrm{Hf}(A)=a_{12}a_{34}+a_{13}a_{24}+a_{14}a_{23}$.
The hafnian of the empty matrix is considered as $1$.
Note that the elements of the main diagonal are not included in the definition of the hafnian.
For the sake of convenience, we assume that all matrices under consideration have a zero main diagonal.

A \textit{$k$-edge matching} in a graph is a set of its $k$ pairwise nonadjacent edges.
An $m$-edge matching in a graph with $2m$ vertices is called \textit{perfect matching}.
If a graph is weighted, then the \textit {weight} of the matching is a product of the weights of the edges
included in this matching.
The hafnian has a useful combinatorial property related to an important problem in graph theory and its applications: 
if $M$ is the adjacency matrix of an undirected weighted graph with an even number of vertices, 
then $\textrm{Hf}(M)$ equals the total sum of the weights  of  the perfect matchings in the graph.
Unfortunately, the widespread use of the hafnian is limited due to
the complexity of its computations in general.
For example, one of the fastest exact algorithms to compute the hafnian of an arbitrary complex $n\times n$ matrix 
runs in $O(n^32^{n/2})$ time, and, as the authors show, it seems to be close to an optimal one \cite{Titan}.

Because the calculation of the hafnian has a high computational complexity in general,
the actual problem is the discovery of efficient analytical formulas expressing the hafnian for special classes of matrices.
Let $T_n$ be a symmetric $(0,1)$-matrix of order $n$, and let $a$ and $b$ be elements of a ring $R$.
We denote a symmetric matrix of order $n$ by $T_n(a,b)$,
which is obtained from $T_n$ by replacing all instances of $1$ by $a$ 
and all zero elements outside the main diagonal by $b$.
For example (dots denote zeros),
$$
T_4=
\left(
\begin{array}{cccc}
\cdot&1&\cdot&1\\
1&\cdot&1&\cdot\\	
\cdot&1&\cdot&1\\
1&\cdot&1&\cdot
\end{array}
\right)\ \ \Longrightarrow\ \  
T_4(a,b)=
\left(
\begin{array}{cccc}
\cdot&a&b&a\\
a&\cdot&a&b\\	
b&a&\cdot&a\\
a&b&a&\cdot
\end{array}
\right).
$$
We can say that $T_n(a,b)$ is a {\itshape two-parameter} matrix, and $T_n$ is the {\itshape template} for $T_n(a,b)$. Note that $T_n(1,0)=T_n$.
In our work, we present an effective method for the exact computation of the hafnian of matrices $T_n(a,b)$.
In terms of graphs, we count the total sum of weights of perfect matchings in two-parameter weighted graphs 
(i.e., weights of the edges are only $a$ and $b$).
This method is based on the formula expressing the hafnian of a sum of two matrices through the sum of the product of the hafnians of matrices
and is also closely linked to the combinatorial problem of counting the number of $k$-edge matchings in graphs.
In theoretical physics, this problem is known as the {\itshape monomer-dimer problem} \cite{Grimson}.

\begin{figure}[!ht]
\centering

\begin{tikzpicture}[scale=0.8]

\def\h{0}; 
\def\v{0};

\begin{scope}[thin]
 \coordinate (A1) at ($(1,2)+(\h,\v)$); 
 \coordinate (A2) at ($(0.5,1)+(\h,\v)$); 
 \coordinate (A3) at ($(1.5,1)+(\h,\v)$);
 \coordinate (A4) at ($(0,0)+(\h,\v)$);
 \coordinate (A5) at ($(1,0)+(\h,\v)$);
 \coordinate (A6) at ($(2,0)+(\h,\v)$);
\end{scope}

\begin{scope}
 \draw (A1) -- (A4);
 \draw (A1) -- (A6);
 \draw (A2) -- (A5);
\end{scope}

\begin{scope}[thick, fill=black]
 \draw [fill] (A1) circle (0.06) node[above=1pt] {{\footnotesize\slshape A}};
 \draw [fill] (A2) circle (0.06) node[left=1pt] {{\footnotesize\slshape B}};
 \draw [fill] (A3) circle (0.06) node[right=1pt] {{\footnotesize\slshape C}};  
 \draw [fill] (A4) circle (0.06) node[below=1pt] {{\footnotesize\slshape D}}; 
 \draw [fill] (A5) circle (0.06) node[below=1pt] {{\footnotesize\textsl E}}; 
 \draw [fill] (A6) circle (0.06) node[below=1pt] {{\footnotesize\slshape F}}; 
\end{scope}

\def\h{3};
\def\v{0};

\begin{scope}[thin]
 \draw[dashed] ($(1,0)+(\h,\v)$) -- ($(6,0)+(\h,\v)$);
 \draw ($(1,0)+(\h,\v)$) arc (180:0:1 and 3/4); 
 \draw ($(1,0)+(\h,\v)$) arc (180:0:0.5 and 3/8);
 \draw ($(2,0)+(\h,\v)$) arc (180:0:1.5 and 9/8);
 \draw ($(2,0)+(\h,\v)$) arc (180:0:1 and 3/4); 
 \draw ($(3,0)+(\h,\v)$) arc (180:0:1.5 and 9/8); 
\end{scope}

\begin{scope} [thick, fill=black]
 \draw [fill] ($(1,0)+(\h,\v)$) circle (0.06) node[below=1pt] {{\footnotesize\slshape A}}; 
 \draw [fill] ($(2,0)+(\h,\v)$) circle (0.06) node[below=1pt] {{\footnotesize\slshape B}};
 \draw [fill] ($(3,0)+(\h,\v)$) circle (0.06) node[below=1pt] {{\footnotesize\slshape C}};  
 \draw [fill] ($(4,0)+(\h,\v)$) circle (0.06) node[below=1pt] {{\footnotesize\slshape D}}; 
 \draw [fill] ($(5,0)+(\h,\v)$) circle (0.06) node[below=1pt] {{\footnotesize\slshape E}}; 
 \draw [fill] ($(6,0)+(\h,\v)$) circle (0.06) node[below=1pt] {{\footnotesize\slshape F}}; 
\end{scope}

\end{tikzpicture}

\caption{A binary tree and its corresponding arc diagram}\label{pic_06.04_1}
\end{figure}
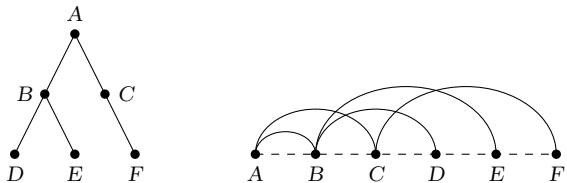

Recall that an \textit{arc diagram} is a graph presentation method in which all the vertices are located along a line in the plane,
whereas all edges are drawn as arcs (Figure \ref{pic_06.04_1}).
In this work, it will be convenient for us to represent graphs in the form of arc diagrams.
Perfect matchings of arc diagrams are often called {\itshape linear chord diagrams} \cite{Krasko, Sullivan}.

\section{Hafnian of two-parameter matrices}

To begin with, consider two properties of the hafnian.
The first one is quite obvious.

\begin{prop}\label{prop1}
 Let $A$ be a symmetric matrix of even order $n$ over a commutative associative ring $R$, and $c\in R$. Then
 \begin{equation}\label{16.10.18_1}
  \mathrm{Hf}(cA)=c^{n/2}\mathrm{Hf}(A).
 \end{equation}
\end{prop}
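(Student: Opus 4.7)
The plan is to apply the definition of the hafnian directly and factor out the scalar. Since the matrix $cA$ has entries $(cA)_{ij}=c\,a_{ij}$, each term in the defining sum
$$
\mathrm{Hf}(cA)=\sum_{(i_1i_2|\dots|i_{n-1}i_n)} (cA)_{i_1i_2}\cdots (cA)_{i_{n-1}i_n}
$$
is a product of exactly $n/2$ entries of the form $c\,a_{i_{2k-1}i_{2k}}$, one for each pair in the partition.

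First I would rewrite a generic summand as $(c\,a_{i_1i_2})(c\,a_{i_3i_4})\cdots(c\,a_{i_{n-1}i_n})$ and, using commutativity and associativity of $R$, collect the $n/2$ copies of $c$ into a single factor $c^{n/2}$. This gives $c^{n/2}\,a_{i_1i_2}\cdots a_{i_{n-1}i_n}$ for each partition.

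Then I would factor $c^{n/2}$ out of the whole sum (this is legal since it does not depend on the partition), yielding
$$
\mathrm{Hf}(cA)=c^{n/2}\sum_{(i_1i_2|\dots|i_{n-1}i_n)} a_{i_1i_2}\cdots a_{i_{n-1}i_n}=c^{n/2}\,\mathrm{Hf}(A).
$$

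There is no real obstacle here; the only subtlety worth flagging is that the argument relies on $n$ being even (so $n/2$ is a well-defined integer and every partition into pairs contributes exactly $n/2$ factors of $c$), and on $R$ being commutative so that the scalars $c$ from different pairs may be gathered together regardless of the order of multiplication. The symmetry of $A$ and the convention on the zero diagonal are not used in this particular proposition.
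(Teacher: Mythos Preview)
Your proof is correct and is exactly the direct verification from the definition that the paper has in mind; the paper itself does not write out a proof, stating only that this property ``is quite obvious.''
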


Let $Q_{k,n}$ denote the set of all unordered  $k$-element subsets of $\{1,2,\dots,n\}$.
Let $A$ be a matrix of order $n$ and $\alpha\in Q_{k,n}$. 
Moreover, $A[\alpha]$ denotes the submatrix of $A$ formed by the rows and columns of $A$ with numbers in $\alpha$, 
and $A\{\alpha\}$ denotes the submatrix of $A$ formed from $A$ by removing the rows and columns with numbers in $\alpha$.
The following property was proved in \cite{Efimov}:

\begin{prop}\label{prop2}
Let $A$ and $B$ be symmetric matrices of even order $n$. Then 
\begin{equation}\label{24.11_3}
 \mathrm{Hf}(A+B)=\sum_{k=0}^{n/2} \sum_{\alpha\in Q_{2k,n}}\mathrm{Hf}(A[\alpha])\mathrm{Hf}(B\{\alpha\}).
\end{equation}
\end{prop}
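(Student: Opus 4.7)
The plan is to expand $\mathrm{Hf}(A+B)$ straight from the definition and regroup the resulting sum according to the set of indices covered by the ``$A$-type'' pairs.

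First I would write
$$
\mathrm{Hf}(A+B)=\sum_{\pi}\prod_{(i,j)\in\pi}(a_{ij}+b_{ij}),
$$
where $\pi$ ranges over the perfect matchings of $\{1,2,\dots,n\}$. Distributing the products, every term is indexed by a perfect matching $\pi$ together with a subset $S\subseteq\pi$ of pairs marked as ``$A$-pairs'' (those contributing $a_{ij}$), the remaining pairs contributing $b_{ij}$. Thus
$$
\mathrm{Hf}(A+B)=\sum_{\pi}\sum_{S\subseteq\pi}\Bigl(\prod_{(i,j)\in S}a_{ij}\Bigr)\Bigl(\prod_{(i,j)\in\pi\setminus S}b_{ij}\Bigr).
$$

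Next I would re-index this double sum by the set $\alpha\subseteq\{1,\dots,n\}$ of vertices covered by the $A$-pairs. Since the pairs in $S$ are disjoint, $\alpha$ has even cardinality $2k$, and conversely any datum $(\pi,S)$ is uniquely reconstructed from the pair $(\pi_A,\pi_B)$, where $\pi_A$ is a perfect matching of $\alpha$ and $\pi_B$ is a perfect matching of the complement $\{1,\dots,n\}\setminus\alpha$. Summing over $\pi_A$ gives, by definition, $\mathrm{Hf}(A[\alpha])$, and summing over $\pi_B$ gives $\mathrm{Hf}(B\{\alpha\})$. Grouping by $k$ and $\alpha\in Q_{2k,n}$ produces exactly the right-hand side of \eqref{24.11_3}.

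The only subtle point I would want to verify is the bijection between the pairs $(\pi,S)$ and the triples $(\alpha,\pi_A,\pi_B)$: one must check that each admissible pair of matchings on complementary even subsets reassembles into a unique perfect matching of $\{1,\dots,n\}$ with a uniquely determined label split. This is the bookkeeping step, and it is routine once the vertex set $\alpha$ covered by $S$ is taken as the indexing parameter. The boundary cases $k=0$ (all pairs are $B$-pairs, with $\mathrm{Hf}$ of the empty matrix equal to $1$) and $k=n/2$ (all pairs are $A$-pairs) are covered automatically by the convention stated in the introduction.
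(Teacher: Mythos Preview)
Your argument is correct: expanding each factor $a_{ij}+b_{ij}$ in the definition of $\mathrm{Hf}(A+B)$, labelling each pair of a perfect matching as an $A$-pair or a $B$-pair, and then regrouping by the vertex set $\alpha$ covered by the $A$-pairs is exactly the natural proof, and the bijection $(\pi,S)\leftrightarrow(\alpha,\pi_A,\pi_B)$ you describe is indeed routine. The boundary cases are handled by the convention $\mathrm{Hf}(\varnothing)=1$, as you note.

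As for comparison with the paper: the paper does not actually prove Proposition~\ref{prop2}. It simply states the result and cites \cite{Efimov} for the proof. The title of that reference (``The hafnian and a commutative analogue of the Grassmann algebra'') suggests that the original argument may proceed via an algebraic identity in a commutative Grassmann-type algebra rather than by the direct combinatorial expansion you give; your approach is the more elementary and self-contained one, and it is entirely adequate here.
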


$J_n(b)$ denotes a matrix of order $n$ whose elements outside the main diagonal are equal to $b$.
From the definition of the hafnian, it follows that
\begin{equation}\label{08.04_1}
 \mathrm{Hf}(J_{2m}(b))=b^m\frac{(2m)!}{m!2^m}.
\end{equation}
Since $T_{2m}(a,b)=J_{2m}(b)+T_{2m}(a-b,0)$, using formulas
(\ref{16.10.18_1}), (\ref{24.11_3}), and (\ref{08.04_1}), we can write the following:
\begin{equation}\label{10.04.19_1}
\begin{split}
  &\mathrm{Hf}(T_{2m}(a,b))=\mathrm{Hf}(J_{2m}(b)+T_{2m}(a-b,0))=\\
	&=\sum_{k=0}^m \sum_{\alpha\in Q_{2k,2m}}\mathrm{Hf}(J_{2m}(b)[\alpha])\mathrm{Hf}(T_{2m}(a-b,0)\{\alpha\})=\\
	&=\sum_{k=0}^m (a-b)^{m-k}b^k\frac{(2k)!}{k!2^k}\sum_{\alpha\in Q_{2k,2m}}\mathrm{Hf}(T_{2m}\{\alpha\}).
\end{split}
\end{equation}
Here, we use the fact that the matrix $J_{2m}(b)[\alpha]$ has the same form as the initial matrix $J_{2m}(b)$,
that is,  $J_{2m}(b)[\alpha]$ is a matrix of order $2k$ whose elements outside the main diagonal are equal to $b$.
If $M$ is a symmetric nonnegative integer matrix, then $\Gamma(M)$ denotes a multigraph with the adjacency matrix $M$.
If $\alpha\in Q_{2k,2m}$, then the hafnian $\mathrm{Hf}(T_{2m}\{\alpha\})$ equals the cardinality
of a set of $(m-k)$-edge matchings in the graph $\Gamma(T_{2m})$;
moreover, such sets do not intersect for different $\alpha$, 
and their union is the set of all $(m-k)$-edge matchings of the graph $\Gamma(T_{2m})$.
Given a graph $\Gamma$, let $\mu_k(\Gamma)$ denote the number of all its $k$-edge matchings.
By definition, we set $\mu_0(\Gamma)=1$. 
Thus, 
$$
\sum_{\alpha\in Q_{2k,2m}}\mathrm{Hf}(T_{2m}\{\alpha\})=\mu_{m-k}(\Gamma(T_{2m})),
$$
and therefore,
\begin{equation}\label{11.04.19_1}
 \mathrm{Hf}(T_{2m}(a,b))=\sum_{k=0}^m (a-b)^{m-k}b^k\frac{(2k)!}{k!2^k}\mu_{m-k}(\Gamma(T_{2m})).
\end{equation}
Note that (\ref{11.04.19_1}) is the special case of Theorems 1W and 3W given
in \cite{Zaslavsky} in terms of matching vectors of weighted graphs and their complements.
The special case of (\ref{11.04.19_1}) when $a=0$ and $b=1$ is also given in \cite{Young} (Theorem $4$). 

Thus, to calculate the hafnian of a two-parameter matrix by using formula (\ref{11.04.19_1}), 
one needs to determine the number of $k$-edge matchings of  graphs corresponding to the matrix, 
which is a nontrivial task in general. 
One of the simplest special cases was considered in \cite{Efimov2}.
In the following section, we consider a few more complicated special cases.

\section{Hafnian of Toeplitz matrices of the first type}
Recall that a matrix is called {\itshape Toeplitz} if all its diagonals parallel
to the main diagonal consist of the same elements. It is obvious that a symmetric Toeplitz matrix
is uniquely determined by its first row.
As the template matrix $T_n$, consider a symmetric Toeplitz matrix of order $n$ with the first row
$$
\left(
\begin{array}{cccccc}
 0&0&1&0&\dots&0
\end{array}
\right).
$$
We denote it by $C_n$.
This matrix is the adjacency matrix of the arc diagram shown in Figure \ref{pic_13.01_21}.

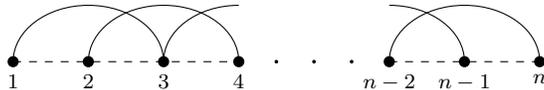
\begin{figure}[!ht]
\centering

\begin{tikzpicture}[scale=1]

\def\h{0}; 
\def\v{0};

\begin{scope}[thin]

 \draw[dashed] ($(1,0)+(\h,\v)$) -- ($(4,0)+(\h,\v)$);
 \draw[dashed] ($(6,0)+(\h,\v)$) -- ($(8,0)+(\h,\v)$);

 \draw ($(1,0)+(\h,\v)$) arc (180:0:1 and 3/4); 
 \draw ($(2,0)+(\h,\v)$) arc (180:0:1 and 3/4);
 \draw ($(3,0)+(\h,\v)$) arc (180:90:1 and 3/4);
 \draw ($(6,0)+(\h,\v)$) arc (180:0:1 and 3/4);
 \draw ($(7,0)+(\h,\v)$) arc (0:90:1 and 3/4);

\end{scope}

\begin{scope} [thick, fill=black]
 \draw [fill] ($(1,0)+(\h,\v)$) circle (0.06) node[below=1pt] {{\footnotesize 1}}; 
 \draw [fill] ($(2,0)+(\h,\v)$) circle (0.06) node[below=1pt] {{\footnotesize 2}}; 
 \draw [fill] ($(3,0)+(\h,\v)$) circle (0.06) node[below=1pt] {{\footnotesize 3}}; 
 \draw [fill] ($(4,0)+(\h,\v)$) circle (0.06) node[below=1pt] {{\footnotesize 4}}; 

 \draw [fill] ($(4.5,0)+(\h,\v)$) circle (0.01);  
 \draw [fill] ($(5,0)+(\h,\v)$) circle (0.01);  
 \draw [fill] ($(5.5,0)+(\h,\v)$) circle (0.01);  

 \draw [fill] ($(6,0)+(\h,\v)$) circle (0.06) node[below=1pt] {{\footnotesize $n-2$}};
 \draw [fill] ($(7,0)+(\h,\v)$) circle (0.06) node[below=1pt] {{\footnotesize $n-1$}}; 
 \draw [fill] ($(8,0)+(\h,\v)$) circle (0.06) node[below=1pt] {{\footnotesize $n$}}; 

\end{scope}

\end{tikzpicture}

\caption{The arc diagram $\Gamma(C_n)$}\label{pic_13.01_21}
\end{figure}

\begin{prop}\label{10.06.19_1}
Let $k$ and $n$ be a pair of nonnegative integers. 
Suppose $n\not= 2k$ when $k$ is odd.
Then, the inequality  
\begin{equation}\label{14.05.19_1}
	\left\lceil \frac{3k-n}{2}\right\rceil\leq \left\lfloor \frac{k}{2}\right\rfloor
\end{equation}
is equivalent to
\begin{equation}\label{14.05.19_2}
	k\leq \left\lfloor \frac{n}{2}\right\rfloor.
\end{equation}
\end{prop}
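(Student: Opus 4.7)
The plan is to eliminate the floor and ceiling by splitting into four cases according to the parities of $k$ and $n$, and to verify that in each case the two inequalities (\ref{14.05.19_1}) and (\ref{14.05.19_2}) reduce to the same elementary linear condition in an auxiliary variable.

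First I would dispatch the two cases with $k$ even. Writing $k=2j$, so that $\lfloor k/2\rfloor=j$, and evaluating $\lceil(3k-n)/2\rceil$ separately for $n=2m$ and for $n=2m+1$, one finds in both instances that the ceiling equals $3j-m$. Hence (\ref{14.05.19_1}) collapses to $2j\le m$, which is exactly (\ref{14.05.19_2}). No hypothesis on $n$ beyond nonnegativity is needed here.

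Next, suppose $k$ is odd and write $k=2j+1$, so $\lfloor k/2\rfloor=j$. For $n=2m+1$ odd, the ceiling is $3j+1-m$, and (\ref{14.05.19_1}) reduces to $2j+1\le m$, again matching $k\le\lfloor n/2\rfloor$. For $n=2m$ even, however, the ceiling is $3j+2-m$, so (\ref{14.05.19_1}) becomes the strictly stronger condition $k\le\lfloor n/2\rfloor-1$.

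The delicate point, and the sole reason for the exclusion "$n\ne 2k$ when $k$ is odd", lies in this last subcase: (\ref{14.05.19_1}) and (\ref{14.05.19_2}) then agree except at the single boundary value $m=2j+1$, which is precisely $n=2(2j+1)=2k$. Removing that configuration reconciles the two conditions and yields the stated equivalence. The main obstacle is therefore not any hard estimate but the careful bookkeeping of the four parity cases, together with the observation that the excluded configuration is exactly the one that would otherwise falsify the equivalence.
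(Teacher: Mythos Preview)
Your proof is correct. It differs from the paper's argument in how the case analysis is organized: rather than splitting on the four parities of $k$ and $n$, the paper writes $n=2k+c$ (distinguishing $c\ge 0$ from $c\le -1$) so that (\ref{14.05.19_1}) becomes $\lceil (k-c)/2\rceil\le\lfloor k/2\rfloor$, and then checks this according to the parity of $k$ alone. Your four-case split is slightly more mechanical but makes each step a one-line computation; the paper's version is shorter but asks the reader to carry out the substitution in the ``$n=2k-c$, $c\ge 1$'' direction. Both arguments isolate the exceptional configuration $n=2k$ with $k$ odd in exactly the same way.
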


\begin{proof}
Suppose (\ref{14.05.19_2}) is wrong, which  means that $n=2k-c$, where $c\geq 1$.
On substituting this expression for $n$ into the inequality (\ref{14.05.19_1}), we can see that the result is wrong as well.
Now suppose (\ref{14.05.19_2}) is true.
Then, $n=2k+c$ and $\left\lceil \frac{3k-n}{2}\right\rceil=\left\lceil \frac{k-c}{2}\right\rceil$ , where $c\geq 0$.
If $k$ is even, then $\left\lceil \frac{k-c}{2}\right\rceil$ does not exceed $\left\lfloor \frac{k}{2}\right\rfloor$.
If $k$ is odd, then, by the assumption, $n\not=2k$.
Therefore, $c\geq 1$, and $\left\lceil \frac{k-c}{2}\right\rceil$ also does not exceed $\left\lfloor \frac{k}{2}\right\rfloor$.
Thus, the inequality (\ref{14.05.19_1}) also holds.
\end{proof}

\begin{prop}\label{12.04.19_3}
Let $k$ and $n$ be a pair of nonnegative integers. 
If  $k\leq \left\lfloor \frac{n}{2}\right\rfloor$, but $n\not=2k$ when $k$ is odd,
then the number of $k$-edge matchings in the arc diagram $\Gamma(C_n)$ is
\begin{equation}\label{19.08.19_1}
 \mu_k(\Gamma(C_n))=
\sum\limits_{i=\max{\left(0,\left\lceil \frac{3k-n}{2}\right\rceil\right)}}^{\left\lfloor \frac{k}{2}\right\rfloor}{n-2k+i\choose k-i}{k-i \choose i}.
\end{equation}
Otherwise, $\mu_k(\Gamma(C_n))=0$.
\end{prop}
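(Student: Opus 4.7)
The plan is to read off a $k$-edge matching of $\Gamma(C_n)$ directly from its arc diagram. Since the edges of $\Gamma(C_n)$ are exactly the pairs $\{j,j+2\}$ with $j=1,\ldots,n-2$, every matching corresponds uniquely to a sorted sequence $1 \le a_1 < a_2 < \cdots < a_k \le n-2$ of left endpoints. The first task is to express vertex-disjointness purely in terms of the consecutive gaps $b_j = a_{j+1} - a_j$. A short case analysis shows that an edge with left endpoint $a_i$ and one with left endpoint $a_j>a_i$ share a vertex iff $a_j - a_i = 2$. For $j=i+1$ this excludes gaps equal to $2$, and for $j=i+2$ it excludes two consecutive unit gaps; indices with $j \ge i+3$ impose no further constraint once the previous rules are in force. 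Hence every $b_j\in\{1\}\cup\{3,4,\ldots\}$, and no two consecutive $b_j$ can both equal $1$.

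The main combinatorial step is to stratify matchings by the number $i$ of unit gaps. First I would count placements of $i$ unit gaps among the $k-1$ gap slots with no two adjacent, obtaining $\binom{k-i}{i}$ by the standard bijection. After fixing such a placement, I would introduce boundary slacks $b_0 = a_1 - 1 \ge 0$ and $b_k = n - 2 - a_k \ge 0$, and substitute $c_j = b_j - 3 \ge 0$ in each of the remaining $k-1-i$ long gaps. The telescoping identity $b_0 + b_k + \sum_{j=1}^{k-1} b_j = n-3$ then transforms into a composition problem $b_0 + b_k + \sum c_j = n - 3k + 2i$ in $k-i+1$ nonnegative summands, whose solution count is $\binom{n-2k+i}{k-i}$. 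Multiplying the two counts and summing over admissible $i$ would reproduce (\ref{19.08.19_1}).

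Pinning down the admissible range of $i$ completes the picture: the no-two-adjacent constraint forces $i \le \lfloor k/2\rfloor$, and nonnegativity of $n-3k+2i$ (equivalently of the upper binomial coefficient) forces $i \ge \lceil (3k-n)/2\rceil$. Proposition \ref{10.06.19_1} is exactly what is needed to confirm that these bounds are compatible under the stated hypotheses, so (\ref{19.08.19_1}) has at least one term. The excluded case $n=2k$ with $k$ odd must be handled separately: here $\Gamma(C_{2k})$ decomposes into two vertex-disjoint paths on $k$ (odd) vertices each, neither of which admits a perfect matching, so $\mu_k(\Gamma(C_{2k})) = 0$.

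The main obstacle I foresee is correctly formalizing the ``no two consecutive unit gaps'' rule and the boundary bookkeeping involving $b_0$ and $b_k$; once these are in place, the remaining steps collapse to two classical binomial counts whose range of summation is precisely the one delivered by Proposition \ref{10.06.19_1}.
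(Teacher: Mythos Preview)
Your argument is correct and complete: the gap-encoding of a matching by left endpoints $a_1<\cdots<a_k$, the reduction of vertex-disjointness to ``$b_j\neq 2$ and no two consecutive $b_j$'s equal $1$'', the stratification by the number $i$ of unit gaps, and the two binomial counts $\binom{k-i}{i}$ and $\binom{n-2k+i}{k-i}$ all check out line by line. The separate treatment of $n=2k$ with $k$ odd via the decomposition into two odd paths is also fine, and the trivial case $k>\lfloor n/2\rfloor$ needs no further comment.

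This is a genuinely different route from the paper. The paper proceeds analytically: it derives the recurrence $v_{n+4,k+2}=v_{n+3,k+2}+v_{n+1,k+1}+v_{n,k}$ by looking at how the first vertex participates in a matching, converts this into a functional equation for the bivariate generating function, solves it as $v(x,t)=\bigl(1-t(1+xt^2+x^2t^3)\bigr)^{-1}$, and then extracts coefficients through a triple binomial expansion. Your approach is purely bijective and explains \emph{why} the summand factors as a product of two binomials: $\binom{k-i}{i}$ counts the non-adjacent placement of unit gaps and $\binom{n-2k+i}{k-i}$ counts a composition in $k-i+1$ nonnegative parts. The paper's method, on the other hand, is more mechanical and transfers with little change to the second template $D_n$ treated later; your gap analysis would need a fresh case study for each new edge pattern. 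Both proofs invoke Proposition~\ref{10.06.19_1} at the same point, to certify that the summation range is nonempty exactly under the stated hypotheses.
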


\begin{proof}
For the sake of convenience, we use the abbreviated notation $v_{n,k}$ for $\mu_k(\Gamma(C_n))$.
Consider a $k$-edge matching in $\Gamma(C_n)$.
It is obvious that if $k>\left\lfloor \frac{n}{2}\right\rfloor$, then $v_{n,k}$=0.
If $n\geq 4$, then the following three cases are possible:
the first vertex of the diagram is not incident to the edge of the matching (Figure \ref{fig_13.01.21}(a));
the first vertex is incident to an edge of the matching, but the second vertex is not (Figure \ref{fig_13.01.21}(b)); 
the first and second vertices are incident to the edges of the matching (Figure \ref{fig_13.01.21}(c)).

\begin{figure}[!ht]
\centering

\begin{tikzpicture}[scale=0.75]

\def\h{0};
\def\v{0};

\begin{scope} [thick, fill=black]
 \draw [fill] ($(1,0)+(\h,\v)$) circle (0.06) node[below=1pt] {{\footnotesize 1}};

 \draw [fill] ($(1.5,0)+(\h,\v)$) circle (0.01);  
 \draw [fill] ($(2,0)+(\h,\v)$) circle (0.01);  
 \draw [fill] ($(2.5,0)+(\h,\v)$) circle (0.01);  
\end{scope}

\draw (\h+1.8,-1.2) node {(a)}; 

\def\h{4};
\def\v{0};

\begin{scope}[thin]
 \draw[dashed] ($(1,0)+(\h,\v)$) -- ($(3,0)+(\h,\v)$);
 \draw ($(1,0)+(\h,\v)$) arc (180:0:1 and 3/4); 
\end{scope}

\begin{scope} [thick, fill=black]
 \draw [fill] ($(1,0)+(\h,\v)$) circle (0.06) node[below=1pt] {{\footnotesize 1}}; 
 \draw [fill] ($(2,0)+(\h,\v)$) circle (0.06) node[below=1pt] {{\footnotesize 2}}; 
 \draw [fill] ($(3,0)+(\h,\v)$) circle (0.06) node[below=1pt] {{\footnotesize 3}}; 

 \draw [fill] ($(3.5,0)+(\h,\v)$) circle (0.01);  
 \draw [fill] ($(4,0)+(\h,\v)$) circle (0.01);  
 \draw [fill] ($(4.5,0)+(\h,\v)$) circle (0.01);  
\end{scope}

\draw (\h+2.75,-1.2) node {(b)}; 

\def\h{10};
\def\v{0};

\begin{scope}[thin]

 \draw[dashed] ($(1,0)+(\h,\v)$) -- ($(4,0)+(\h,\v)$);

 \draw ($(1,0)+(\h,\v)$) arc (180:0:1 and 3/4); 
 \draw ($(2,0)+(\h,\v)$) arc (180:0:1 and 3/4);

\end{scope}

\begin{scope} [thick, fill=black]
 \draw [fill] ($(1,0)+(\h,\v)$) circle (0.06) node[below=1pt] {{\footnotesize 1}}; 
 \draw [fill] ($(2,0)+(\h,\v)$) circle (0.06) node[below=1pt] {{\footnotesize 2}}; 
 \draw [fill] ($(3,0)+(\h,\v)$) circle (0.06) node[below=1pt] {{\footnotesize 3}}; 
 \draw [fill] ($(4,0)+(\h,\v)$) circle (0.06) node[below=1pt] {{\footnotesize 4}}; 

 \draw [fill] ($(4.5,0)+(\h,\v)$) circle (0.01);  
 \draw [fill] ($(5,0)+(\h,\v)$) circle (0.01);  
 \draw [fill] ($(5.5,0)+(\h,\v)$) circle (0.01);  

\end{scope}

\draw (\h+3.25,-1.2) node {(c)}; 

\end{tikzpicture}

\caption{Possible cases of matchings in the arc diagram $\Gamma(C_n)$}\label{fig_13.01.21}
\end{figure}
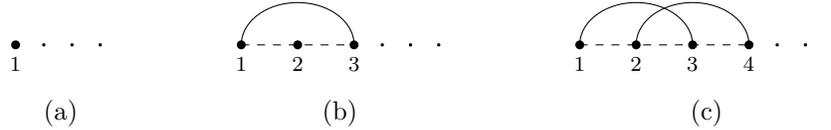

It follows from the above that $v_{n,k}$ satisfies the recurrence relation
\begin{equation}\label{22.03_1}
	v_{n+4,k+2}=v_{n+3,k+2}+v_{n+1,k+1}+v_{n,k}
\end{equation}
with the initial conditions $v_{n,k}=0$ for $k>\left\lfloor \frac{n}{2} \right\rfloor$;
$v_{n,0}=1$ for all $n$; $v_{n,1}=n-2$ for $n\geq 2$.
Consider the two-parameter generating function $v(x,t)$ for the sequence $v_{n,k}$:
$$
v(x,t)=\sum_{n=0}^{+\infty}\sum_{k=0}^{\left\lfloor \frac{n}{2}\right\rfloor}v_{n,k}x^kt^n.
$$
By multiplying (\ref{22.03_1}) by $x^{k+3}t^{n+3}$ and summing over all possible $k$ and $n$, we obtain the following equation:
\begin{equation}\label{29.03_5}
	\begin{split}
		\sum_{n=0}^{+\infty}\sum_{k=0}^{\left\lfloor\frac{n}{2}\right\rfloor}v_{n+4,k+2}x^{k+3}t^{n+3}
		=\sum_{n=0}^{+\infty}\sum_{k=0}^{\left\lfloor \frac{n}{2}\right\rfloor}v_{n+3,k+2}x^{k+3}t^{n+3}+\\
		+\sum_{n=0}^{+\infty}\sum_{k=0}^{\left\lfloor\frac{n}{2}\right\rfloor}v_{n+1,k+1}x^{k+3}t^{n+3}
		+\sum_{n=0}^{+\infty}\sum_{k=0}^{\left\lfloor \frac{n}{2}\right\rfloor}v_{n,k}x^{k+3}t^{n+3}.
	\end{split}
\end{equation}
Consider separately the formal power series on the left side of this equation.
\begin{equation*}\label{29.03_1}
	\begin{split}
	\sum_{n=0}^{+\infty}\sum_{k=0}^{\left\lfloor \frac{n}{2}\right\rfloor}v_{n+4,k+2}x^{k+3}t^{n+3}=\frac{x}{t}
	\left(\sum_{n=0}^{+\infty}\sum_{k=0}^{\left\lfloor \frac{n}{2}\right\rfloor}v_{n+4,k+2}x^{k+2}t^{n+4}\right)=\\
	=\frac{x}{t}\left(v(x,t)-\sum_{n=0}^{+\infty}v_{n,0}t^n-\sum_{n=1}^{+\infty}v_{n,1}xt^n\right)=\\
	=\frac{x}{t}\left(v(x,t)-\sum_{n=0}^{+\infty}t^n-x\sum_{n=3}^{+\infty}(n-2)t^n\right)=\\
	=\frac{x}{t}\left(v(x,t)-\frac{1}{1-t}-xt^3\left(\sum_{n=1}^{+\infty}t^n\right)'\right)=\\
	=\frac{x}{t}\left(v(x,t)-\frac{1}{1-t}-\frac{xt^3}{(1-t)^2}\right).
	\end{split}
\end{equation*}
In the same way, we get 
\begin{equation*}\label{29.03_2}
	\sum_{n=0}^{+\infty}\sum_{k=0}^{\left\lfloor \frac{n}{2}\right\rfloor}v_{n+3,k+2}x^{k+3}t^{n+3}=
	x\left(v(x,t)-\frac{1}{1-t}-\frac{xt^3}{(1-t)^2}\right).
\end{equation*}
In addition,
\begin{equation*}\label{29.03_3}
	\begin{split}
		\sum_{n=0}^{+\infty}\sum_{k=0}^{\left\lfloor \frac{n}{2}\right\rfloor}v_{n+1,k+1}x^{k+3}t^{n+3}=
		x^2t^2\sum_{n=0}^{+\infty}\sum_{k=0}^{\left\lfloor \frac{n}{2}\right\rfloor}v_{n+1,k+1}x^{k+1}t^{n+1}=\\
		x^2t^2\left(v(x,t)-\sum_{n=0}^{+\infty}v_{n,0}t^n\right)=x^2t^2\left(v(x,t)-\frac{1}{1-t}\right).
	\end{split}
\end{equation*}
Finally,
\begin{equation*}\label{29.03_4}
	\sum_{n=0}^{+\infty}\sum_{k=0}^{\left\lfloor \frac{n}{2}\right\rfloor}v_{n,k}x^{k+3}t^{n+3}=x^3t^3v(x,t).
\end{equation*}
On substituting all of the above into (\ref{29.03_5}), we get
\begin{eqnarray*}
	\frac{x}{t}\left(v(x,t)-\frac{1}{1-t}-\frac{xt^3}{(1-t)^2}\right)=
	x\left(v(x,t)-\frac{1}{1-t}-\frac{xt^3}{(1-t)^2}\right)+\\
	+x^2t^2\left(v(x,t)-\frac{1}{1-t}\right)+x^3t^3v(x,t).
\end{eqnarray*}
On solving this equation, we obtain:
\begin{equation}\label{31.03_1}
	\begin{split}
		v(x,t)=\frac{1}{1-t(1+xt^2+x^2t^3)}=\sum_{m=0}^{+\infty}t^m(1+xt^2+x^2t^3)^m=\\
		\sum_{m=0}^{+\infty}t^m\sum_{j=0}^m{m\choose j}(xt^2+x^2t^3)^j=\sum_{m=0}^{+\infty}\sum_{j=0}^m{m\choose j}x^jt^{m+2j}(1+xt)^j=\\
	  \sum_{m=0}^{+\infty}\sum_{j=0}^m{m\choose j}x^jt^{m+2j}\sum_{i=0}^j{j\choose i}(xt)^i=
		\sum_{m=0}^{+\infty}\sum_{j=0}^m\sum_{i=0}^j{m\choose j}{j\choose i}x^{j+i}t^{m+2j+i}.
	\end{split}
\end{equation}
Fix nonnegative integers $k,n$, $k\leq \left\lfloor \frac{n}{2}\right\rfloor$.
From (\ref{31.03_1}), we see that the coefficient at $x^kt^n$ is equal to $\sum\limits_i C_{n-2k+i}^{k-i}C_{k-i}^i$
over all $i$, for which the inequalities $i\geq 0$, $k-i\geq i$, $n-2k+i\geq k-i$ hold.
The last two inequalities can be rewritten as $i\leq \left\lfloor \frac{k}{2}\right\rfloor$,
$i\geq \left\lceil \frac{3k-n}{2}\right\rceil$.
Thus, for the set of acceptable values of $i$ to be nonempty,
it is necessary that $\left\lceil \frac{3k-n}{2}\right\rceil\leq \left\lfloor \frac{k}{2}\right\rfloor$.
However, according to Proposition \ref{10.06.19_1}, this condition 
is equivalent to $k\leq \left\lfloor \frac{n}{2}\right\rfloor$, except for the case when $k$ is odd and $n=2k$.
In the last case,  it is clear that the inequality $\left\lceil \frac{3k-n}{2}\right\rceil\leq \left\lfloor \frac{k}{2}\right\rfloor$ 
does not hold, and therefore, the coefficient at $x^kt^n$ is equal to zero.
This completes the proof.
\end{proof}

\begin{rmk}
Several initial values $\mu_k(\Gamma(C_n))$ are presented in Table \ref{t_14.01.21}.
The empty cells correspond to zero.
Table \ref{t_14.01.21} coincides with Table $A220074$ in \cite{OEIS} up to the sign.

\begin{table}[!ht]
\centering
{\renewcommand{\arraystretch}{0}%
\begin{tabular}{|c||c|c|c|c|c|c|c|c|c|c|c|c|c|}
 \hline\strut
 \backslashbox{$k$}{$n$}&\footnotesize $0$&\footnotesize $1$&\footnotesize $2$&\footnotesize $3$&\footnotesize $4$&\footnotesize $5$&\footnotesize $6$&
 \footnotesize $7$&\footnotesize $8$&\footnotesize $9$&\footnotesize $10$&\footnotesize $11$&\footnotesize $12$\\
 \hline
 \rule{0pt}{1.6pt}&&&&&&&&&&&\\
 \hline\strut
  \footnotesize $0$&\footnotesize $1$&\footnotesize $1$&\footnotesize $1$&\footnotesize $1$&\footnotesize $1$&\footnotesize $1$&\footnotesize $1$&\footnotesize $1$&\footnotesize $1$&\footnotesize $1$&\footnotesize $1$&\footnotesize $1$&\footnotesize $1$\\
	
 \hline\strut
  \footnotesize $1$&&&&\footnotesize $1$&\footnotesize $2$&\footnotesize $3$&\footnotesize $4$&\footnotesize $5$&\footnotesize $6$&\footnotesize $7$&\footnotesize $8$&\footnotesize $9$&\footnotesize $10$\\
 
 \hline\strut
  \footnotesize $2$&&&&&\footnotesize $1$&\footnotesize $2$&\footnotesize $4$&\footnotesize $7$&\footnotesize $11$&\footnotesize $16$&\footnotesize $22$&\footnotesize $29$&\footnotesize $37$\\
	
 \hline\strut
  \footnotesize $3$&&&&&&&&\footnotesize $2$&\footnotesize $6$&\footnotesize $13$&\footnotesize $24$&\footnotesize $40$&\footnotesize $62$\\
	
 \hline\strut
  \footnotesize $4$&&&&&&&&&\footnotesize $1$&\footnotesize $3$&\footnotesize $9$&\footnotesize $22$&\footnotesize $46$\\
	
 \hline\strut
  \footnotesize $5$&&&&&&&&&&&&\footnotesize $3$&\footnotesize $12$\\
	
 \hline\strut
  \footnotesize $6$&&&&&&&&&&&&&\footnotesize $1$\\

\hline
\end{tabular}
}
\caption{Number of $k$-edge matchings in the graph $\Gamma(C_n)$}\label{t_14.01.21}
\end{table}
\end{rmk}

Let $R$ be a commutative associative ring with $1$ and $a,b\in R$.
Consider a symmetric two-parameter Toeplitz matrix $C_{2m}(a, b)$
having the first row of the form
$$
\left(
\begin{array}{cccccc}
 0&b&a&b&\dots&b
\end{array}
\right).
$$
On substituting the value $\mu_{m-k}(\Gamma(C_{2m}))$ in (\ref{11.04.19_1}), we obtain the following theorem:
\begin{thm}
If we assume that $0^0=1$, then the hafnian of the matrix $C_{2m}(a,b)$ 
can be calculated using the following formula:
\begin{equation}\label{12.04.19_2}
\begin{split}
  &\mathrm{Hf}(C_{2m}(a,b))=\\
	&\sum_{k=p}^m (a-b)^{m-k}b^k\frac{(2k)!}{k!2^k}
	\sum\limits_{i=\max{\left(0,\left\lceil \frac{m-3k}{2}\right\rceil\right)}}^{\left\lfloor \frac{m-k}{2}\right\rfloor}
{2k+i\choose m-k-i}{m-k-i\choose i}\ ,
\end{split}
\end{equation}
where $p=0$ when $m$ is even and $p=1$ when $m$ is odd.
\end{thm}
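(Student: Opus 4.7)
The plan is to obtain the theorem by direct substitution: apply formula (\ref{11.04.19_1}) with $T_{2m}=C_{2m}$ and then replace each matching count $\mu_{m-k}(\Gamma(C_{2m}))$ by the closed form given in Proposition \ref{12.04.19_3}. Nothing deeper than careful bookkeeping of indices should be required, since all the analytic work already sits in (\ref{11.04.19_1}) and in formula (\ref{19.08.19_1}).

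First I would specialize (\ref{11.04.19_1}) to the Toeplitz template $C_{2m}$, writing
\begin{equation*}
  \mathrm{Hf}(C_{2m}(a,b))=\sum_{k=0}^m (a-b)^{m-k}b^k\frac{(2k)!}{k!2^k}\,\mu_{m-k}(\Gamma(C_{2m})).
\end{equation*}
Then I would invoke Proposition \ref{12.04.19_3} with $n=2m$ and matching size $m-k$. The substitutions $n-2(m-k)=2k$ and $3(m-k)-n=m-3k$ transform the binomial expression in (\ref{19.08.19_1}) into exactly the inner sum appearing in (\ref{12.04.19_2}), with summation index $i$ running from $\max(0,\lceil (m-3k)/2\rceil)$ up to $\lfloor (m-k)/2\rfloor$. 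This handles the body of the double sum.

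The only non-mechanical step is identifying the lower limit $p$. For each $k\in\{0,1,\dots,m\}$ I need to check when Proposition \ref{12.04.19_3} returns a nonzero value for $\mu_{m-k}(\Gamma(C_{2m}))$. The inequality $m-k\le \lfloor 2m/2\rfloor=m$ is automatic, so the only vanishing case is the exceptional one: $m-k$ odd together with $2m=2(m-k)$, i.e., $k=0$ and $m$ odd. Consequently the $k=0$ contribution is zero precisely when $m$ is odd, and all other terms are covered by (\ref{19.08.19_1}); this forces $p=0$ for even $m$ and $p=1$ for odd $m$, and justifies the convention $0^0=1$ (needed to handle the factor $(a-b)^{m-k}$ when $a=b$ and $m=k$).

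The main obstacle, such as it is, will be making sure the exceptional case in Proposition \ref{12.04.19_3} is translated correctly into the lower limit $p$; everything else is index shuffling. I would close by noting that under the $0^0=1$ convention the outer summand at $k=m$ reduces to $b^m(2m)!/(m!2^m)$ times $\mu_0(\Gamma(C_{2m}))=1$, reproducing (\ref{08.04_1}) as a sanity check, which confirms the formula on the trivial branch $a=b$.
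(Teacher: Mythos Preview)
Your proposal is correct and follows exactly the same route as the paper, which proves the theorem by the single sentence ``On substituting the value $\mu_{m-k}(\Gamma(C_{2m}))$ in (\ref{11.04.19_1}), we obtain the following theorem.'' Your treatment is in fact more detailed than the paper's, since you explicitly trace the index substitutions and verify that the exceptional case of Proposition~\ref{12.04.19_3} (namely $m-k$ odd with $2m=2(m-k)$, i.e.\ $k=0$ and $m$ odd) is precisely what produces the lower limit $p$.
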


\begin{rmk}
Equality (\ref{12.04.19_2}) allows us to calculate $\mathrm{Hf}(C_{2m}(a,b))$ in time $O(m^3)$.
\end{rmk}

\begin{ex}
Consider the matrix $C_{2m}(0,1)$. 
By calculating its hafnian using formula (\ref{12.04.19_2}) for consecutive $m$'s, we obtain the sequence:
$$
\begin{array}{c|c|c|c|c|c|c|c|c|c|c|c}
 m&1&2&3&4&5&6&7&8&9&10&\dots\\
 \hline
 \mathrm{Hf}&1&2&7&43&372&4027&51871&773186&13083385&247698481&\dots
\end{array}
$$
In terms of graph theory, its $m$-th member equals the number of perfect matchings in the arc diagram $\Gamma(C_{2m}(0,1))$ (Figure \ref{arc8}). 
In other words, this is the number of linear chord diagrams with $m$ chords such that the length of each chord does not equal $2$ (see \cite{Sullivan}). 
This sequence has the number $A265229$ in \cite{OEIS}, but  its description does not contain the  interpretation given here.

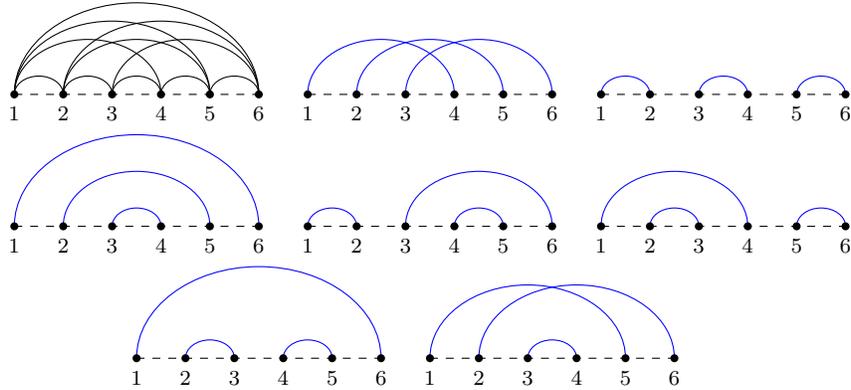
\begin{figure}[!ht]
\centering
\begin{tikzpicture}[scale=0.65]


\draw[dashed, thin] (1,0) -- (6,0);

\begin{scope}[thin]
 \draw (1,0) arc (180:0:1.5 and 9/8);
 \draw (2,0) arc (180:0:1.5 and 9/8);
 \draw (3,0) arc (180:0:1.5 and 9/8);
 \draw (1,0) arc (180:0:0.5 and 3/8);
 \draw (2,0) arc (180:0:0.5 and 3/8);
 \draw (3,0) arc (180:0:0.5 and 3/8);
 \draw (4,0) arc (180:0:0.5 and 3/8);
 \draw (5,0) arc (180:0:0.5 and 3/8);
 \draw (1,0) arc (180:0:2 and 3/2);
 \draw (2,0) arc (180:0:2 and 3/2);
 \draw (1,0) arc (180:0:2.5 and 15/8);
\end{scope}

\begin{scope} [thick, fill=black]
	\draw [fill] ($(1,0)$) circle (0.06) node[below=1pt] {{\footnotesize 1}};
	\draw [fill] ($(2,0)$) circle (0.06) node[below=1pt] {{\footnotesize 2}};
	\draw [fill] ($(3,0)$) circle (0.06) node[below=1pt] {{\footnotesize 3}};
	\draw [fill] ($(4,0)$) circle (0.06) node[below=1pt] {{\footnotesize 4}};
	\draw [fill] ($(5,0)$) circle (0.06) node[below=1pt] {{\footnotesize 5}};
	\draw [fill] ($(6,0)$) circle (0.06) node[below=1pt] {{\footnotesize 6}};
\end{scope}


\def\h{6};
\def\v{0};

\draw[dashed, thin] ($(1,0)+(\h,\v)$) -- ($(6,0)+(\h,\v)$);

\begin{scope}[thin, blue]
 \draw ($(1,0)+(\h,\v)$) arc (180:0:1.5 and 9/8);
 \draw ($(2,0)+(\h,\v)$) arc (180:0:1.5 and 9/8);
 \draw ($(3,0)+(\h,\v)$) arc (180:0:1.5 and 9/8);
\end{scope}

\begin{scope} [thick, fill=black]
	\draw [fill] ($(1,0)+(\h,\v)$) circle (0.06) node[below=1pt] {{\footnotesize 1}};
	\draw [fill] ($(2,0)+(\h,\v)$) circle (0.06) node[below=1pt] {{\footnotesize 2}};
	\draw [fill] ($(3,0)+(\h,\v)$) circle (0.06) node[below=1pt] {{\footnotesize 3}};
	\draw [fill] ($(4,0)+(\h,\v)$) circle (0.06) node[below=1pt] {{\footnotesize 4}};
	\draw [fill] ($(5,0)+(\h,\v)$) circle (0.06) node[below=1pt] {{\footnotesize 5}};
	\draw [fill] ($(6,0)+(\h,\v)$) circle (0.06) node[below=1pt] {{\footnotesize 6}};
\end{scope}


\def\h{12};
\def\v{0};

\draw[dashed, thin] ($(1,0)+(\h,\v)$) -- ($(6,0)+(\h,\v)$);

\begin{scope}[thin, blue]
 \draw ($(1,0)+(\h,\v)$) arc (180:0:0.5 and 3/8);
 \draw ($(3,0)+(\h,\v)$) arc (180:0:0.5 and 3/8);
 \draw ($(5,0)+(\h,\v)$) arc (180:0:0.5 and 3/8);
\end{scope}

\begin{scope} [thick, fill=black]
	\draw [fill] ($(1,0)+(\h,\v)$) circle (0.06) node[below=1pt] {{\footnotesize 1}};
	\draw [fill] ($(2,0)+(\h,\v)$) circle (0.06) node[below=1pt] {{\footnotesize 2}};
	\draw [fill] ($(3,0)+(\h,\v)$) circle (0.06) node[below=1pt] {{\footnotesize 3}};
	\draw [fill] ($(4,0)+(\h,\v)$) circle (0.06) node[below=1pt] {{\footnotesize 4}};
	\draw [fill] ($(5,0)+(\h,\v)$) circle (0.06) node[below=1pt] {{\footnotesize 5}};
	\draw [fill] ($(6,0)+(\h,\v)$) circle (0.06) node[below=1pt] {{\footnotesize 6}};
\end{scope}


\def\h{0};
\def\v{-2.7};

\draw[dashed, thin] ($(1,0)+(\h,\v)$) -- ($(6,0)+(\h,\v)$);

\begin{scope}[thin, blue]
 \draw ($(1,0)+(\h,\v)$) arc (180:0:2.5 and 15/8);
 \draw ($(2,0)+(\h,\v)$) arc (180:0:1.5 and 9/8);
 \draw ($(3,0)+(\h,\v)$) arc (180:0:0.5 and 3/8);
\end{scope}

\begin{scope} [thick, fill=black]
	\draw [fill] ($(1,0)+(\h,\v)$) circle (0.06) node[below=1pt] {{\footnotesize 1}};
	\draw [fill] ($(2,0)+(\h,\v)$) circle (0.06) node[below=1pt] {{\footnotesize 2}};
	\draw [fill] ($(3,0)+(\h,\v)$) circle (0.06) node[below=1pt] {{\footnotesize 3}};
	\draw [fill] ($(4,0)+(\h,\v)$) circle (0.06) node[below=1pt] {{\footnotesize 4}};
	\draw [fill] ($(5,0)+(\h,\v)$) circle (0.06) node[below=1pt] {{\footnotesize 5}};
	\draw [fill] ($(6,0)+(\h,\v)$) circle (0.06) node[below=1pt] {{\footnotesize 6}};
\end{scope}


\def\h{6};
\def\v{-2.7};

\draw[dashed, thin] ($(1,0)+(\h,\v)$) -- ($(6,0)+(\h,\v)$);

\begin{scope}[thin, blue]
 \draw ($(1,0)+(\h,\v)$) arc (180:0:0.5 and 3/8);
 \draw ($(3,0)+(\h,\v)$) arc (180:0:1.5 and 9/8);
 \draw ($(4,0)+(\h,\v)$) arc (180:0:0.5 and 3/8);
\end{scope}

\begin{scope} [thick, fill=black]
	\draw [fill] ($(1,0)+(\h,\v)$) circle (0.06) node[below=1pt] {{\footnotesize 1}};
	\draw [fill] ($(2,0)+(\h,\v)$) circle (0.06) node[below=1pt] {{\footnotesize 2}};
	\draw [fill] ($(3,0)+(\h,\v)$) circle (0.06) node[below=1pt] {{\footnotesize 3}};
	\draw [fill] ($(4,0)+(\h,\v)$) circle (0.06) node[below=1pt] {{\footnotesize 4}};
	\draw [fill] ($(5,0)+(\h,\v)$) circle (0.06) node[below=1pt] {{\footnotesize 5}};
	\draw [fill] ($(6,0)+(\h,\v)$) circle (0.06) node[below=1pt] {{\footnotesize 6}};
\end{scope}


\def\h{12};
\def\v{-2.7};

\draw[dashed, thin] ($(1,0)+(\h,\v)$) -- ($(6,0)+(\h,\v)$);

\begin{scope}[thin, blue]
 \draw ($(1,0)+(\h,\v)$) arc (180:0:1.5 and 9/8);
 \draw ($(2,0)+(\h,\v)$) arc (180:0:0.5 and 3/8);
 \draw ($(5,0)+(\h,\v)$) arc (180:0:0.5 and 3/8);
\end{scope}

\begin{scope} [thick, fill=black]
	\draw [fill] ($(1,0)+(\h,\v)$) circle (0.06) node[below=1pt] {{\footnotesize 1}};
	\draw [fill] ($(2,0)+(\h,\v)$) circle (0.06) node[below=1pt] {{\footnotesize 2}};
	\draw [fill] ($(3,0)+(\h,\v)$) circle (0.06) node[below=1pt] {{\footnotesize 3}};
	\draw [fill] ($(4,0)+(\h,\v)$) circle (0.06) node[below=1pt] {{\footnotesize 4}};
	\draw [fill] ($(5,0)+(\h,\v)$) circle (0.06) node[below=1pt] {{\footnotesize 5}};
	\draw [fill] ($(6,0)+(\h,\v)$) circle (0.06) node[below=1pt] {{\footnotesize 6}};
\end{scope}

\def\h{2.5};
\def\v{-5.4};

\draw[dashed, thin] ($(1,0)+(\h,\v)$) -- ($(6,0)+(\h,\v)$);

\begin{scope}[thin, blue]
 \draw ($(1,0)+(\h,\v)$) arc (180:0:2.5 and 15/8);
 \draw ($(2,0)+(\h,\v)$) arc (180:0:0.5 and 3/8);
 \draw ($(4,0)+(\h,\v)$) arc (180:0:0.5 and 3/8);
\end{scope}

\begin{scope} [thick, fill=black]
	\draw [fill] ($(1,0)+(\h,\v)$) circle (0.06) node[below=1pt] {{\footnotesize 1}};
	\draw [fill] ($(2,0)+(\h,\v)$) circle (0.06) node[below=1pt] {{\footnotesize 2}};
	\draw [fill] ($(3,0)+(\h,\v)$) circle (0.06) node[below=1pt] {{\footnotesize 3}};
	\draw [fill] ($(4,0)+(\h,\v)$) circle (0.06) node[below=1pt] {{\footnotesize 4}};
	\draw [fill] ($(5,0)+(\h,\v)$) circle (0.06) node[below=1pt] {{\footnotesize 5}};
	\draw [fill] ($(6,0)+(\h,\v)$) circle (0.06) node[below=1pt] {{\footnotesize 6}};
\end{scope}

\def\h{8.5};
\def\v{-5.4};

\draw[dashed, thin] ($(1,0)+(\h,\v)$) -- ($(6,0)+(\h,\v)$);

\begin{scope}[thin, blue]
 \draw ($(1,0)+(\h,\v)$) arc (180:0:2 and 3/2);
 \draw ($(2,0)+(\h,\v)$) arc (180:0:2 and 3/2);
 \draw ($(3,0)+(\h,\v)$) arc (180:0:0.5 and 3/8);
\end{scope}

\begin{scope} [thick, fill=black]
	\draw [fill] ($(1,0)+(\h,\v)$) circle (0.06) node[below=1pt] {{\footnotesize 1}};
	\draw [fill] ($(2,0)+(\h,\v)$) circle (0.06) node[below=1pt] {{\footnotesize 2}};
	\draw [fill] ($(3,0)+(\h,\v)$) circle (0.06) node[below=1pt] {{\footnotesize 3}};
	\draw [fill] ($(4,0)+(\h,\v)$) circle (0.06) node[below=1pt] {{\footnotesize 4}};
	\draw [fill] ($(5,0)+(\h,\v)$) circle (0.06) node[below=1pt] {{\footnotesize 5}};
	\draw [fill] ($(6,0)+(\h,\v)$) circle (0.06) node[below=1pt] {{\footnotesize 6}};
\end{scope}

\end{tikzpicture}

\caption{The arc diagram $\Gamma(C_6(0,1))$ and all its perfect matchings}\label{arc8}
\end{figure}
\end{ex}

\section{Hafnian of Toeplitz matrices of the second type}
As the template matrix $T_n$, now consider a symmetric Toeplitz matrix of order $n$ with the first row
$$
\left(
\begin{array}{cccccc}
 0&1&1&0&\dots&0
\end{array}
\right).
$$
We denote it by $D_n$.
This matrix is the adjacency matrix of the arc diagram shown in Figure \ref{pic_16.10_1}.

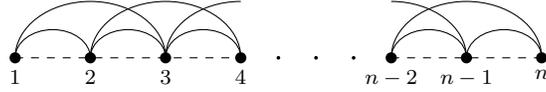
\begin{figure}[!ht]
\centering

\begin{tikzpicture}[scale=1]

\def\h{0};
\def\v{0};

\begin{scope}[thin]

 \draw[dashed] ($(1,0)+(\h,\v)$) -- ($(4,0)+(\h,\v)$);
 \draw[dashed] ($(6,0)+(\h,\v)$) -- ($(8,0)+(\h,\v)$);

 \draw ($(1,0)+(\h,\v)$) arc (180:0:1 and 3/4); 
 \draw ($(1,0)+(\h,\v)$) arc (180:0:0.5 and 3/8); 
 \draw ($(2,0)+(\h,\v)$) arc (180:0:1 and 3/4);
 \draw ($(2,0)+(\h,\v)$) arc (180:0:0.5 and 3/8); 
 \draw ($(3,0)+(\h,\v)$) arc (180:90:1 and 3/4);
 \draw ($(3,0)+(\h,\v)$) arc (180:0:0.5 and 3/8); 
 \draw ($(6,0)+(\h,\v)$) arc (180:0:1 and 3/4);
 \draw ($(6,0)+(\h,\v)$) arc (180:0:0.5 and 3/8); 
 \draw ($(7,0)+(\h,\v)$) arc (180:0:0.5 and 3/8); 
 \draw ($(7,0)+(\h,\v)$) arc (0:90:1 and 3/4);

\end{scope}

\begin{scope} [thick, fill=black]
 \draw [fill] ($(1,0)+(\h,\v)$) circle (0.06) node[below=1pt] {{\footnotesize 1}}; 
 \draw [fill] ($(2,0)+(\h,\v)$) circle (0.06) node[below=1pt] {{\footnotesize 2}}; 
 \draw [fill] ($(3,0)+(\h,\v)$) circle (0.06) node[below=1pt] {{\footnotesize 3}}; 
 \draw [fill] ($(4,0)+(\h,\v)$) circle (0.06) node[below=1pt] {{\footnotesize 4}}; 

 \draw [fill] ($(4.5,0)+(\h,\v)$) circle (0.01);  
 \draw [fill] ($(5,0)+(\h,\v)$) circle (0.01);  
 \draw [fill] ($(5.5,0)+(\h,\v)$) circle (0.01);  

 \draw [fill] ($(6,0)+(\h,\v)$) circle (0.06) node[below=1pt] {{\footnotesize $n-2$}};
 \draw [fill] ($(7,0)+(\h,\v)$) circle (0.06) node[below=1pt] {{\footnotesize $n-1$}}; 
 \draw [fill] ($(8,0)+(\h,\v)$) circle (0.06) node[below=1pt] {{\footnotesize $n$}}; 

\end{scope}

\end{tikzpicture}

\caption{The arc diagram $\Gamma(D_n)$}\label{pic_16.10_1}
\end{figure}

\begin{thm}\label{17.10_1}
Let $k$ and $n$ be nonnegative integers such that $k\leq \left\lfloor \frac{n}{2}\right\rfloor$.
Then, the number of $k$-edge matchings in the arc diagram $\Gamma(D_n)$ is equal to the following: 
\begin{equation}\label{16.10_4}
 \mu_k(\Gamma(D_n))=
\sum_{i=0}^{\min(k,\left\lfloor \frac{n-k}{2}\right\rfloor)} 
\sum\limits_{p=\max(0,i+2k-n)}^{\min(i,k-i)}
{n-k-i \choose k-p} {k-p \choose i} {i \choose p}.
\end{equation}
\end{thm}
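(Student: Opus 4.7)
The plan is to establish a bijection between $k$-edge matchings of $\Gamma(D_n)$ and ordered tilings of the vertex sequence $1, 2, \ldots, n$ by four types of ``blocks'', count the tilings with a multinomial coefficient, and reindex to recover the stated triple product of binomials.

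First I would analyze the local interaction of length-$2$ edges. The midpoint $j+1$ of an edge $\{j, j+2\}$ has only $j-1$ and $j+3$ as potential additional partners in $\Gamma(D_n)$, so length-$2$ edges appear either as isolated singletons or as \emph{crossing pairs} $\{j, j+2\}, \{j+1, j+3\}$; three mutually interlocking length-$2$ edges cannot coexist since any two among $\{j-1, j+1\}, \{j, j+2\}, \{j+1, j+3\}$ that share a vertex are forbidden. This leads to four block types, each covering a contiguous run of vertices: \textbf{U} (one unmatched vertex), \textbf{S} (a short edge, two vertices), \textbf{L} (an isolated long edge together with its unmatched midpoint, three vertices), and \textbf{X} (a crossing pair, four vertices). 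Scanning left to right, I would show by induction that every matching decomposes uniquely as an ordered sequence of such blocks: if $j$ is the leftmost unassigned vertex, then $j$'s partner (if any) must lie in $\{j+1, j+2\}$, and the four cases (none / $j+1$ / $j+2$ with $j+1$ unmatched / $j+2$ with $j+1$ matched to $j+3$) determine the block starting at $j$.

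Writing $u, s, \ell, c$ for the numbers of $\mathbf{U}, \mathbf{S}, \mathbf{L}, \mathbf{X}$ blocks, the constraints $u + 2s + 3\ell + 4c = n$ and $s + \ell + 2c = k$ give $u + s + \ell + c = n - k - \ell - c$. Every ordered arrangement of the blocks corresponds to a distinct $k$-edge matching, so the count is the multinomial $\binom{u+s+\ell+c}{u, s, \ell, c}$. Setting $p := c$ (crossings) and $i := \ell + c$ (total number of blocks containing a long edge), one obtains $\ell = i - p$, $s = k - i - p$, $u = n - 2k - i + p$, total block count $n - k - i$, and then, by a direct factorial manipulation,
$$
\binom{n-k-i}{u,\, s,\, \ell,\, c} \;=\; \binom{n-k-i}{k-p}\binom{k-p}{i}\binom{i}{p}.
$$
The non-negativity of each of $u, s, \ell, c$ translates into exactly the summation ranges $\max(0, i+2k-n) \le p \le \min(i, k-i)$ and $0 \le i \le \min(k, \lfloor (n-k)/2 \rfloor)$ asserted in the theorem.

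The main obstacle will be step two: rigorously establishing uniqueness of the greedy block decomposition. The critical point is that during the scan the leftmost free vertex is never matched to a vertex lying in a previously placed block. This reduces to the structural observation that each of the four block types is internally closed under the matching (all matching edges incident to its vertices are internal), which follows by inspection of the local analysis of length-$2$ edges; the induction then propagates this property along the scan.
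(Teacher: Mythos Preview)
Your proof is correct and takes a genuinely different route from the paper. The paper argues via the recurrence
\[
w_{n+4,k+2}=w_{n+3,k+2}+w_{n+2,k+1}+w_{n+1,k+1}+w_{n,k},
\]
passes to the bivariate generating function $w(x,t)=\bigl(1-t(1+xt+xt^2+x^2t^3)\bigr)^{-1}$, expands it as a geometric series and then as nested binomial sums, and finally extracts the coefficient of $x^kt^n$. You instead set up a direct bijection with ordered tilings by the four blocks $\mathbf U,\mathbf S,\mathbf L,\mathbf X$, count these by a single multinomial, and reindex. The two arguments are closely related --- your four block types are precisely the four cases in the paper's Figure~\ref{fig_07.06.19_1}, and the monomials $t,\,xt^2,\,xt^3,\,x^2t^4$ attached to the blocks are exactly the summands appearing in $1/(1-(t+xt^2+xt^3+x^2t^4))$ --- but your version bypasses the generating-function machinery entirely and yields the triple binomial product in one stroke. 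The paper's approach, on the other hand, makes the generating function explicit, which is useful if one wants asymptotics or further identities. Your inductive justification that previously placed blocks are internally closed (so the leftmost free vertex $j$ cannot be matched to $j-1$ or $j-2$, and $j+1$ cannot be matched to $j-1$) is the one point that deserves a careful write-up, but the sketch you give is sound.
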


\begin{proof}
For convenience, we use the abbreviated notation $w_{n,k}$ for  $\mu_k(\Gamma(D_n))$.
Consider a $k$-edge matching in $\Gamma(D_n)$.
If $n\geq 4$, then the following four cases are possible:
the first vertex of the diagram is not incident to the edge of the matching (Figure \ref{fig_07.06.19_1}(a));
the first and second vertices  are connected by an edge of the matching (Figure \ref{fig_07.06.19_1}(b)); 
the first vertex is incident to an edge of the matching, but the second vertex is not (Figure \ref{fig_07.06.19_1}(c)); 
the first and second vertices are incident to different edges of the matching (Figure \ref{fig_07.06.19_1}(d)).

\begin{figure}[!ht]
\centering

\begin{tikzpicture}[scale=0.75]

\def\h{0};
\def\v{0};

\begin{scope} [thick, fill=black]
 \draw [fill] ($(1,0)+(\h,\v)$) circle (0.06) node[below=1pt] {{\footnotesize 1}};

 \draw [fill] ($(1.5,0)+(\h,\v)$) circle (0.01);  
 \draw [fill] ($(2,0)+(\h,\v)$) circle (0.01);  
 \draw [fill] ($(2.5,0)+(\h,\v)$) circle (0.01);  
\end{scope}

\draw (\h+1.8,-1.2) node {(a)};

\def\h{2.5};
\def\v{0};

\begin{scope}[thin]
 \draw[dashed] ($(1,0)+(\h,\v)$) -- ($(2,0)+(\h,\v)$);
 \draw ($(1,0)+(\h,\v)$) arc (180:0:0.5 and 3/8); 
\end{scope}

\begin{scope} [thick, fill=black]
 \draw [fill] ($(1,0)+(\h,\v)$) circle (0.06) node[below=1pt] {{\footnotesize 1}}; 
 \draw [fill] ($(2,0)+(\h,\v)$) circle (0.06) node[below=1pt] {{\footnotesize 2}}; 

 \draw [fill] ($(2.5,0)+(\h,\v)$) circle (0.01);  
 \draw [fill] ($(3,0)+(\h,\v)$) circle (0.01);  
 \draw [fill] ($(3.5,0)+(\h,\v)$) circle (0.01);  
\end{scope}

\draw (\h+2.25,-1.2) node {(b)}; 
\def\h{6};
\def\v{0};

\begin{scope}[thin]
 \draw[dashed] ($(1,0)+(\h,\v)$) -- ($(3,0)+(\h,\v)$);
 \draw ($(1,0)+(\h,\v)$) arc (180:0:1 and 3/4); 
\end{scope}

\begin{scope} [thick, fill=black]
 \draw [fill] ($(1,0)+(\h,\v)$) circle (0.06) node[below=1pt] {{\footnotesize 1}}; 
 \draw [fill] ($(2,0)+(\h,\v)$) circle (0.06) node[below=1pt] {{\footnotesize 2}}; 
 \draw [fill] ($(3,0)+(\h,\v)$) circle (0.06) node[below=1pt] {{\footnotesize 3}}; 

 \draw [fill] ($(3.5,0)+(\h,\v)$) circle (0.01);  
 \draw [fill] ($(4,0)+(\h,\v)$) circle (0.01);  
 \draw [fill] ($(4.5,0)+(\h,\v)$) circle (0.01);  
\end{scope}

\draw (\h+2.75,-1.2) node {(c)}; 

\def\h{10.5};
\def\v{0};

\begin{scope}[thin]

 \draw[dashed] ($(1,0)+(\h,\v)$) -- ($(4,0)+(\h,\v)$);

 \draw ($(1,0)+(\h,\v)$) arc (180:0:1 and 3/4); 
 \draw ($(2,0)+(\h,\v)$) arc (180:0:1 and 3/4);

\end{scope}

\begin{scope} [thick, fill=black]
 \draw [fill] ($(1,0)+(\h,\v)$) circle (0.06) node[below=1pt] {{\footnotesize 1}}; 
 \draw [fill] ($(2,0)+(\h,\v)$) circle (0.06) node[below=1pt] {{\footnotesize 2}}; 
 \draw [fill] ($(3,0)+(\h,\v)$) circle (0.06) node[below=1pt] {{\footnotesize 3}}; 
 \draw [fill] ($(4,0)+(\h,\v)$) circle (0.06) node[below=1pt] {{\footnotesize 4}}; 

 \draw [fill] ($(4.5,0)+(\h,\v)$) circle (0.01);  
 \draw [fill] ($(5,0)+(\h,\v)$) circle (0.01);  
 \draw [fill] ($(5.5,0)+(\h,\v)$) circle (0.01);  

\end{scope}

\draw (\h+3.25,-1.2) node {(d)}; 

\end{tikzpicture}

\caption{Possible cases of matchings in the arc diagram $\Gamma(D_n)$}\label{fig_07.06.19_1}
\end{figure}
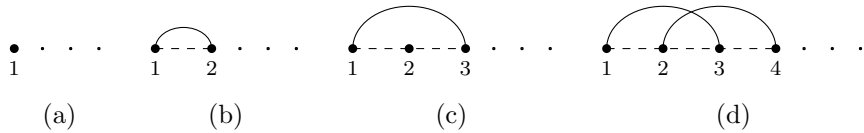

It follows from the above that $w_{n,k}$ satisfies the recurrence relation
\begin{equation}\label{16.10_1}
	w_{n+4,k+2}=w_{n+3,k+2}+w_{n+2,k+1}+w_{n+1,k+1}+w_{n,k}
\end{equation}
with the initial conditions $w_{n,k}=0$ for $k>\left\lfloor \frac{n}{2} \right\rfloor$;
$w_{n,0}=1$ for all $n$; $w_{n,1}=2n-3$ for $n\geq 2$.
Consider the two-parameter generating function for the sequence $w_{n,k}$:
$$
w(x,t)=\sum_{n=0}^{+\infty}\sum_{k=0}^{\left\lfloor \frac{n}{2}\right\rfloor}w_{n,k}x^kt^n.
$$
On multiplying  (\ref{16.10_1}) by $x^{k+3}t^{n+3}$ and summing over all possible $k$ and $n$, 
we get the following equation:
\begin{equation}\label{16.10_2}
	\begin{split}
		\sum_{n=0}^{+\infty}\sum_{k=0}^{\left\lfloor\frac{n}{2}\right\rfloor}w_{n+4,k+2}x^{k+3}t^{n+3}
		=\sum_{n=0}^{+\infty}\sum_{k=0}^{\left\lfloor \frac{n}{2}\right\rfloor}w_{n+3,k+2}x^{k+3}t^{n+3}+\\
		\sum_{n=0}^{+\infty}\sum_{k=0}^{\left\lfloor\frac{n}{2}\right\rfloor}w_{n+2,k+1}x^{k+3}t^{n+3}
		+\sum_{n=0}^{+\infty}\sum_{k=0}^{\left\lfloor\frac{n}{2}\right\rfloor}w_{n+1,k+1}x^{k+3}t^{n+3}+\\
		\sum_{n=0}^{+\infty}\sum_{k=0}^{\left\lfloor \frac{n}{2}\right\rfloor}w_{n,k}x^{k+3}t^{n+3}.
	\end{split}
\end{equation}
On solving this equation, we obtain:
\begin{equation}\label{16.10_3}
	\begin{split}
		w(x,t)=&\frac{1}{1-t(1+xt+xt^2+x^2t^3)}=\\
		&\sum_{m=0}^{+\infty}\sum_{j=0}^m\sum_{i=0}^j\sum_{p=0}^i{m \choose j}{j \choose i}{i \choose p}x^{j+p}t^{m+j+i+p}.
	\end{split}
\end{equation}
Fix nonnegative integers $k$ and $n\geq 2k$.
From (\ref{16.10_3}), we see that the coefficient at $x^kt^n$ is equal to the sum 
$\sum\limits_{i}\sum\limits_p {n-k-i \choose k-p}{k-p \choose i}{i \choose p}$, 
over all $i,p$ for which the inequalities $i\geq p\geq 0$, $k-p\geq i$, $n-k-i\geq k-p$ hold.
It can be shown that this system of inequalities is equivalent 
to the following system of inequalities:
$0\leq i\leq \min(k,\left\lfloor\frac{n-k}{2} \right\rfloor)$,
$\max(0,i+2k-n)\leq p\leq \min(i,k-i)$.
This completes the proof.
\end{proof}

\begin{rmk}
Note that the arc diagram $\Gamma(D_n)$ is isomorphic to the triangular lattice shown in Figure \ref{pic_17.10_1}.
Thus, formula (\ref{16.10_4}) also allows us to calculate the number of $k$-edge matchings in such lattices. 

\begin{figure}[!ht]
\centering

\begin{tikzpicture}[scale=0.9]

\def\h{0}; 
\def\v{0};

\begin{scope}[thin]

 \draw ($(1,0)+(\h,\v)$) -- ($(2,1)+(\h,\v)$);
 \draw ($(2,0)+(\h,\v)$) -- ($(3,1)+(\h,\v)$);
 \draw ($(1,0)+(\h,\v)$) -- ($(3.5,0)+(\h,\v)$);
 \draw ($(1,1)+(\h,\v)$) -- ($(3.5,1)+(\h,\v)$);
 \draw ($(1,0)+(\h,\v)$) -- ($(1,1)+(\h,\v)$);
 \draw ($(2,0)+(\h,\v)$) -- ($(2,1)+(\h,\v)$);
 \draw ($(3,0)+(\h,\v)$) -- ($(3,1)+(\h,\v)$);
 \draw ($(4.5,0)+(\h,\v)$) -- ($(6,0)+(\h,\v)$);
 \draw ($(5,0)+(\h,\v)$) -- ($(5,1)+(\h,\v)$);
 \draw ($(4.5,1)+(\h,\v)$) -- ($(6,1)+(\h,\v)$);
 \draw ($(6,0)+(\h,\v)$) -- ($(6,1)+(\h,\v)$);
 \draw ($(5,0)+(\h,\v)$) -- ($(6,1)+(\h,\v)$);

\end{scope}

\begin{scope} [thick, fill=black]
 \draw [fill] ($(1,0)+(\h,\v)$) circle (0.06) node[below=1pt] {{\footnotesize 2}}; 
 \draw [fill] ($(1,1)+(\h,\v)$) circle (0.06) node[above=1pt] {{\footnotesize 1}}; 
 \draw [fill] ($(2,0)+(\h,\v)$) circle (0.06) node[below=1pt] {{\footnotesize 4}}; 
 \draw [fill] ($(2,1)+(\h,\v)$) circle (0.06) node[above=1pt] {{\footnotesize 3}}; 
 \draw [fill] ($(3,0)+(\h,\v)$) circle (0.06) node[below=1pt] {{\footnotesize 6}}; 
 \draw [fill] ($(3,1)+(\h,\v)$) circle (0.06) node[above=1pt] {{\footnotesize 5}}; 

 \draw [fill] ($(3.5,0)+(\h,\v+0.5)$) circle (0.01);  
 \draw [fill] ($(4,0)+(\h,\v+0.5)$) circle (0.01);  
 \draw [fill] ($(4.5,0)+(\h,\v+0.5)$) circle (0.01);  

 \draw [fill] ($(5,0)+(\h,\v)$) circle (0.06) node[below=1pt] {{\footnotesize $n-2$}};
 \draw [fill] ($(5,1)+(\h,\v)$) circle (0.06) node[above=1pt] {{\footnotesize $n-3$}}; 
 \draw [fill] ($(6,0)+(\h,\v)$) circle (0.06) node[below=1pt] {{\footnotesize $n$}}; 
 \draw [fill] ($(6,1)+(\h,\v)$) circle (0.06) node[above=1pt] {{\footnotesize $n-1$}}; 

\end{scope}

\draw (\h+3.5,-1.2) node {(a)}; 

\def\h{7}; 
\def\v{0};

\begin{scope}[thin]

 \draw ($(1,0)+(\h,\v)$) -- ($(2,1)+(\h,\v)$);
 \draw ($(2,0)+(\h,\v)$) -- ($(3,1)+(\h,\v)$);
 \draw ($(1,0)+(\h,\v)$) -- ($(3.5,0)+(\h,\v)$);
 \draw ($(1,1)+(\h,\v)$) -- ($(3.5,1)+(\h,\v)$);
 \draw ($(1,0)+(\h,\v)$) -- ($(1,1)+(\h,\v)$);
 \draw ($(2,0)+(\h,\v)$) -- ($(2,1)+(\h,\v)$);
 \draw ($(3,0)+(\h,\v)$) -- ($(3,1)+(\h,\v)$);
 \draw ($(5,0)+(\h,\v)$) -- ($(5,1)+(\h,\v)$);
 \draw ($(4.5,1)+(\h,\v)$) -- ($(6,1)+(\h,\v)$);
 \draw ($(5,0)+(\h,\v)$) -- ($(6,1)+(\h,\v)$);
 \draw ($(4.5,0)+(\h,\v)$) -- ($(5,0)+(\h,\v)$);

\end{scope}

\begin{scope} [thick, fill=black]
 \draw [fill] ($(1,0)+(\h,\v)$) circle (0.06) node[below=1pt] {{\footnotesize 2}}; 
 \draw [fill] ($(1,1)+(\h,\v)$) circle (0.06) node[above=1pt] {{\footnotesize 1}}; 
 \draw [fill] ($(2,0)+(\h,\v)$) circle (0.06) node[below=1pt] {{\footnotesize 4}}; 
 \draw [fill] ($(2,1)+(\h,\v)$) circle (0.06) node[above=1pt] {{\footnotesize 3}}; 
 \draw [fill] ($(3,0)+(\h,\v)$) circle (0.06) node[below=1pt] {{\footnotesize 6}}; 
 \draw [fill] ($(3,1)+(\h,\v)$) circle (0.06) node[above=1pt] {{\footnotesize 5}}; 

 \draw [fill] ($(3.5,0)+(\h,\v+0.5)$) circle (0.01);  
 \draw [fill] ($(4,0)+(\h,\v+0.5)$) circle (0.01);  
 \draw [fill] ($(4.5,0)+(\h,\v+0.5)$) circle (0.01);  

 \draw [fill] ($(5,0)+(\h,\v)$) circle (0.06) node[below=1pt] {{\footnotesize $n-1$}};
 \draw [fill] ($(5,1)+(\h,\v)$) circle (0.06) node[above=1pt] {{\footnotesize $n-2$}}; 
 \draw [fill] ($(6,1)+(\h,\v)$) circle (0.06) node[above=1pt] {{\footnotesize $n$}}; 

\end{scope}

\draw (\h+3.5,-1.2) node {(b)}; 

\end{tikzpicture}

\caption{The triangular lattice $\Gamma(D_n)$: (a) $n$ is even; (b) $n$ is odd}\label{pic_17.10_1}
\end{figure}
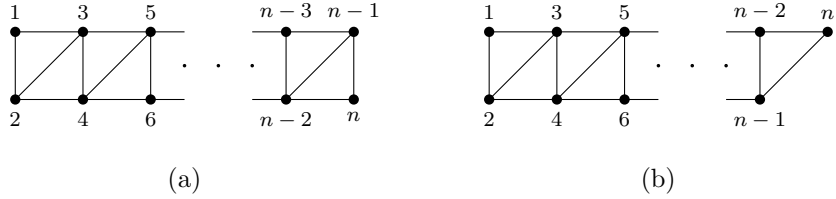
\end{rmk}

\begin{rmk}
Several initial values $\mu_k(\Gamma(D_n))$ are presented in Table \ref{t_10.01.21}.
The empty cells correspond to zero.
Note that the sequence of the first nonzero elements in the rows is the Fibonacci sequence,
the sequence of the second nonzero elements in the rows has the number $A023610$ in \cite{OEIS},
and nonzero elements of the third row coincide with the sequence $A130883$, excluding the starting element.

\begin{table}[!ht]
\centering
{\renewcommand{\arraystretch}{0}%
\begin{tabular}{|c||c|c|c|c|c|c|c|c|c|c|c|c|c|}
 \hline\strut
 \backslashbox{$k$}{$n$}&\footnotesize $0$&\footnotesize $1$&\footnotesize $2$&\footnotesize $3$&\footnotesize $4$&\footnotesize $5$&\footnotesize $6$&
 \footnotesize $7$&\footnotesize $8$&\footnotesize $9$&\footnotesize $10$&\footnotesize $11$&\footnotesize $12$\\
 \hline
 \rule{0pt}{1.6pt}&&&&&&&&&&&\\
 \hline\strut
  \footnotesize $0$&\footnotesize $1$&\footnotesize $1$&\footnotesize $1$&\footnotesize $1$&\footnotesize $1$&\footnotesize $1$&\footnotesize $1$&\footnotesize $1$&\footnotesize $1$&\footnotesize $1$&\footnotesize $1$&\footnotesize $1$&\footnotesize $1$\\
	
 \hline\strut
  \footnotesize $1$&&&\footnotesize $1$&\footnotesize $3$&\footnotesize $5$&\footnotesize $7$&\footnotesize $9$&\footnotesize $11$&\footnotesize $13$&\footnotesize $15$&\footnotesize $17$&\footnotesize $19$&\footnotesize $21$\\
 
 \hline\strut
  \footnotesize $2$&&&&&\footnotesize $2$&\footnotesize $7$&\footnotesize $16$&\footnotesize $29$&\footnotesize $46$&\footnotesize $67$&\footnotesize $92$&\footnotesize $121$&\footnotesize $154$\\
	
 \hline\strut
  \footnotesize $3$&&&&&&&\footnotesize $3$&\footnotesize $15$&\footnotesize $43$&\footnotesize $95$&\footnotesize $179$&\footnotesize $303$&\footnotesize $475$\\
	
 \hline\strut
  \footnotesize $4$&&&&&&&&&\footnotesize $5$&\footnotesize $30$&\footnotesize $104$&\footnotesize $271$&\footnotesize $591$\\
	
 \hline\strut
  \footnotesize $5$&&&&&&&&&&&\footnotesize $8$&\footnotesize $58$&\footnotesize $235$\\
	
 \hline\strut
  \footnotesize $6$&&&&&&&&&&&&&\footnotesize $13$\\

\hline
\end{tabular}
}
\caption{Number of $k$-edge matchings in the graph $\Gamma(D_n)$}\label{t_10.01.21}
\end{table}
\end{rmk}

Let $R$ be a commutative associative ring with $1$ and $a,b\in R$.
Consider a symmetric two-parameter Toeplitz matrix $D_{2m}(a, b)$
having the first row in the form
$$
\left(
\begin{array}{cccccc}
 0&a&a&b&\dots&b
\end{array}
\right).
$$
On substituting the value $\mu_{m-k}(\Gamma(D_{2m}))$ in (\ref{11.04.19_1}), we obtain the following theorem:
\begin{thm}
If we assume that $0^0=1$, then the hafnian of the matrix $D_{2m}(a, b)$ is expressed using the following formula: 
\begin{equation}\label{17.10_3}
  \mathrm{Hf}(D_{2m}(a,b))=\sum_{k=0}^m (a-b)^{m-k}b^k\frac{(2k)!}{k!2^k}\mu_{m-k}(\Gamma(D_{2m}))\ ,
\end{equation}
where
\begin{equation*}
\begin{split}
&\mu_{m-k}(\Gamma(D_{2m}))=\\
&\sum_{i=0}^{\min(m-k,\left\lfloor \frac{m+k}{2}\right\rfloor)} 
\sum\limits_{p=\max(0,i-2k)}^{\min(i,m-k-i)}
{m+k-i \choose m-k-p} {m-k-p \choose i} {i \choose p}.
\end{split}
\end{equation*}
\end{thm}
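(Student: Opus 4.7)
The plan is to derive this theorem as an immediate corollary of two results already in hand: the master formula (\ref{11.04.19_1}) applied to the template $T_{2m} := D_{2m}$, and the closed-form matching count established in Theorem \ref{17.10_1}. Essentially all the substantive combinatorial work has already been done in Theorem \ref{17.10_1}; what remains is indexing bookkeeping.

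First I would observe that $D_{2m}(a,b)$ is a two-parameter matrix in the sense of Section~1 with template $D_{2m}$, so (\ref{11.04.19_1}) applies directly and yields the outer sum $\sum_{k=0}^m (a-b)^{m-k} b^k \frac{(2k)!}{k!\,2^k}\, \mu_{m-k}(\Gamma(D_{2m}))$. The convention $0^0 = 1$ enters only at the endpoint $k=m$, where $(a-b)^{m-k}$ must be read as $1$; this is consistent with $\mu_0(\Gamma)=1$ and with the hafnian of the empty matrix being $1$.

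Next I would plug in the explicit value of $\mu_{m-k}(\Gamma(D_{2m}))$ coming from Theorem \ref{17.10_1}. Concretely I would apply (\ref{16.10_4}) with $n := 2m$ and with the matching-size parameter (called $k$ in that theorem) replaced by $m-k$. Under this substitution the upper summation bound $\lfloor (n-k)/2 \rfloor$ becomes $\lfloor (m+k)/2 \rfloor$, the lower bound $\max(0, i+2k-n)$ becomes $\max(0, i-2k)$, and the product of binomial coefficients $\binom{n-k-i}{k-p}\binom{k-p}{i}\binom{i}{p}$ rewrites as $\binom{m+k-i}{m-k-p}\binom{m-k-p}{i}\binom{i}{p}$. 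These are exactly the terms appearing in the claimed formula.

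The only minor item to check is that the hypothesis $m-k \leq \lfloor 2m/2 \rfloor = m$ of Theorem \ref{17.10_1} holds for every $k$ in the outer range $\{0,1,\dots,m\}$, which is trivial. I do not anticipate any genuine obstacle: the combinatorial core (four-case recurrence, generating function derivation, and simplification of the summation ranges) lives in the proof of Theorem \ref{17.10_1}, and the present statement is obtained by straightforward substitution.
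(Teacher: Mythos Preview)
Your proposal is correct and follows exactly the paper's approach: the paper derives this theorem in a single sentence by substituting the value of $\mu_{m-k}(\Gamma(D_{2m}))$ from Theorem~\ref{17.10_1} into the general formula~(\ref{11.04.19_1}). Your bookkeeping on the substitution $n\mapsto 2m$, $k\mapsto m-k$ is accurate and in fact spells out more detail than the paper does.
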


\begin{rmk}
Equality (\ref{17.10_3}) allows us to calculate $\mathrm{Hf}(D_{2m}(a,b))$ in time $O(m^4)$.
\end{rmk}

\begin{ex}
Consider the matrix $D_{2m}(0,1)$. 
By calculating its hafnian using formula (\ref{17.10_3}) for consecutive $m$s, we obtain the sequence:
$$
\begin{array}{c|c|c|c|c|c|c|c|c|c|c|c|}
 m&1&2&3&4&5&6&7&8&9&10&\dots\\
 \hline
 \mathrm{Hf}&0&0&1&10&99&1146&15422&237135&4106680&79154927&\dots
\end{array}
$$
In terms of graphs, the $m$-th member is equal to the number of perfect matchings in the arc diagram $\Gamma(D_{2m}(0,1))$ (Figure \ref{arc9}). 
In other words, this is the number of linear chord diagrams with $m$ chords such that the length of every chord is at least $3$ (see also \cite{Sullivan}, \cite{Cameron}). 
This sequence has the number $A190823$ in \cite{OEIS}.

\begin{figure}[!ht]
\centering
\begin{tikzpicture}[scale=0.65]


\draw[dashed, thin] (1,0) -- (6,0);

\begin{scope}[thin]
 \draw (1,0) arc (180:0:1.5 and 9/8);
 \draw (2,0) arc (180:0:1.5 and 9/8);
 \draw (3,0) arc (180:0:1.5 and 9/8);
 \draw (1,0) arc (180:0:2 and 3/2);
 \draw (2,0) arc (180:0:2 and 3/2);
 \draw (1,0) arc (180:0:2.5 and 15/8);
\end{scope}

\begin{scope} [thick, fill=black]
	\draw [fill] ($(1,0)$) circle (0.06) node[below=1pt] {{\footnotesize 1}};
	\draw [fill] ($(2,0)$) circle (0.06) node[below=1pt] {{\footnotesize 2}};
	\draw [fill] ($(3,0)$) circle (0.06) node[below=1pt] {{\footnotesize 3}};
	\draw [fill] ($(4,0)$) circle (0.06) node[below=1pt] {{\footnotesize 4}};
	\draw [fill] ($(5,0)$) circle (0.06) node[below=1pt] {{\footnotesize 5}};
	\draw [fill] ($(6,0)$) circle (0.06) node[below=1pt] {{\footnotesize 6}};
\end{scope}


\def\h{7};
\def\v{0};

\draw[dashed, thin] ($(1,0)+(\h,\v)$) -- ($(6,0)+(\h,\v)$);

\begin{scope}[thin, blue]
 \draw ($(1,0)+(\h,\v)$) arc (180:0:1.5 and 9/8);
 \draw ($(2,0)+(\h,\v)$) arc (180:0:1.5 and 9/8);
 \draw ($(3,0)+(\h,\v)$) arc (180:0:1.5 and 9/8);
\end{scope}

\begin{scope} [thick, fill=black]
	\draw [fill] ($(1,0)+(\h,\v)$) circle (0.06) node[below=1pt] {{\footnotesize 1}};
	\draw [fill] ($(2,0)+(\h,\v)$) circle (0.06) node[below=1pt] {{\footnotesize 2}};
	\draw [fill] ($(3,0)+(\h,\v)$) circle (0.06) node[below=1pt] {{\footnotesize 3}};
	\draw [fill] ($(4,0)+(\h,\v)$) circle (0.06) node[below=1pt] {{\footnotesize 4}};
	\draw [fill] ($(5,0)+(\h,\v)$) circle (0.06) node[below=1pt] {{\footnotesize 5}};
	\draw [fill] ($(6,0)+(\h,\v)$) circle (0.06) node[below=1pt] {{\footnotesize 6}};
\end{scope}

\end{tikzpicture}

\caption{The arc diagram $\Gamma(D_6(0,1))$ and its only perfect matching}\label{arc9}
\end{figure}
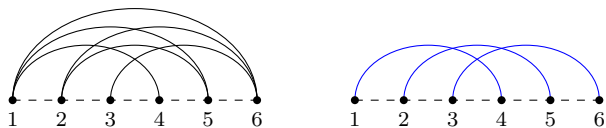

\end{ex}


\begin{thebibliography}{99}
\bibitem{Titan}
{\it  Bj\"orklund A., Gupt B., Quesada N.\/} {A faster hafnian formula for complex matrices
and its benchmarking on a supercomputer} // ACM Journal of Experimental Algorithmics. 2019. V. 24. 1.11. 

\bibitem{Grimson}
{\it Grimson R.C.\/} {Enumeration of Dimer (Domino) Configurations} //
Discrete Mathematics. 1977. V. 18. pp. 167--178. 

\bibitem{Krasko} 
{\it Krasko E., Omelchenko A.\/} Enumeration of chord diagrams without loops and parallel chords
// The Electronic Journal of Combinatorics. 2017. 24(3). \#3.43. 

\bibitem{Sullivan}
{\it Sullivan E.\/} {Linear chord diagrams with long chords} //
The Electronic Journal of Combinatorics. 2017. V. 24(4). pp. 1--8. 

\bibitem{Efimov}
{\it Efimov D.B.\/} The hafnian and a commutative analogue of the Grassmann algebra //
Electronic Journal of Linear Algebra. 2018. V. 34. pp. 54-60. 


\bibitem{Zaslavsky}
{\it Zaslavsky T.\/}  {Complementary Matching Vectors and the Uniform Matching Extension Property} //
European Journal of Combinatorics. 1981. V. 2. pp. 91--103. 
 
\bibitem{Young}
{\it Young D.\/} {The Number of Domino Matchings in the Game of Memory} //
Journal of Integer Sequences. 2018. V. 21(8). pp. 1--14.

\bibitem{Efimov2}
{\it Efimov D.B.} {The hafnian of Toeplitz matrices of special type, perfect matchings and Bessel polynomials} //
Bulletin of Syktyvkar University. Series 1: Mathematics. Mechanics. Informatics. 2018. V. 3(28). pp. 56--64. (in Russian).

\bibitem{OEIS}
{\it  N.J.A. Sloane, editor\/} The On-Line Encyclopedia of Integer Sequences, published electronically at \url{https://oeis.org}.

\bibitem{Cameron}
{\it Cameron N.T., Killpatrick K.\/} {Statistics on Linear Chord Diagrams} //
Discrete Mathematics and Theoretical Computer Science. 2019. V. 21(2). pp. 1--10. 


\end{thebibliography}
\end{document}